\author{Tai-Danae Bradley and Juan Pablo Vigneaux}
\thanks{The authors thank Matilde Marcolli and Yiannis Vlassopoulos for engaging in numerous insightful discussions on category theory, language models, and linguistics that have influenced this article. We also thank the anonymous referee whose feedback helped to improve the article considerably.}
\address{SandboxAQ, Palo Alto, CA 94301, USA\\
Department of Mathematics, The Master's University, Santa Clarita, CA 91321, USA\\[5pt]
 Department of Mathematics, California Institute of Technology, Pasadena, CA 91125, USA\\
 Department of Linguistics, Northwestern University, Evanston, IL 60208, USA
}
\title[Categories of texts enriched by language models]{The magnitude of categories of texts enriched by language models}
\keywords{categorical magnitude, language model, generalized metric space, entropy}
\newcommand{\Int}{\operatorname{int}}
\DeclareMathOperator{\ob}{ob}
\newcommand{\Mag}{\operatorname{Mag}}
\newtheorem{proposition}{Proposition}
\newtheorem{corollary}{Corollary}
\begin{document}

\maketitle

\begin{abstract}
    The purpose of this article is twofold. Firstly, we use the next-token probabilities given by a language model to explicitly define a category of texts in natural language enriched over the unit interval, in the sense of Bradley, Terilla, and Vlassopoulos. We consider explicitly the terminating conditions for text generation and determine when the enrichment itself can be interpreted as a probability over texts.  Secondly, we compute the M\"obius function and the magnitude of an associated generalized metric space of texts. The magnitude function of that space is a sum over texts (\emph{prompts}) of the $t$-logarithmic (Tsallis) entropies of the next-token probability distributions associated with each prompt, plus the cardinality of the model's possible outputs. A suitable evaluation of the magnitude function's derivative recovers a sum of Shannon entropies, which justifies seeing magnitude as a partition function. Following Leinster and Shulman, we also express the magnitude function of the generalized metric space as an Euler characteristic of magnitude homology and provide an explicit description of the zeroeth and first magnitude homology groups.
\end{abstract}

\section{Introduction}

In recent years, advances in large language models (LLMs) have brought renewed attention to mathematical structures underlying language. Building on the observation that LLMs acquire a great deal of semantic information through the statistics of co-occurrences of fragments of text, the first author together with Terilla and Vlassopoulos described a category-theoretical framework for mathematical structure inherent in text corpora in \citep{ECTL2022}. This involves a category of strings from a finite alphabet of symbols, with morphisms indicating substring containment; the strings correspond to texts or fragments of texts in a  language.  Statistical information is incorporated through an enrichment of this category over the unit interval by assigning a value $\pi(y|x)\in[0,1]$ to each pair of strings $x$ and $y$, which can intuitively be thought of as the probability that $y$ is an extension of a prompt $x$. By further embedding this enriched category of strings into an enriched category of copresheaves valued in the unit interval, a setting which contains rich mathematical structure, one can further explore semantic information via enriched analogues of categorical limits and colimits, which are akin to logical operations on meaning representations of texts \citep{ECTL2022}.  More recently, Liu \emph{et al.}~have provided experimental evidence supporting this semantic interpretation of the co-Yoneda embedding of texts in the enriched category of copresheaves \citep{liumeaning}. 

While an explicit construction of $\pi(y|x)$ was not given in \citep{ECTL2022}, we will show in this article that these values may in fact arise from next-token probabilities generated by a language model. For context: in natural language processing, texts are analyzed by  splitting them into tokens|which might be characters, words, or word fragments|and a \emph{language model} (LM) is an algorithm (possibly learned) that, for any finite sequence of \emph{tokens} (i.e.~a text fragment), assigns a probability to any given token being its continuation \cite[Ch.~3]{Jurafsky2025}. These are sometimes referred as \emph{autoregressive} or \emph{causal} LMs, particularly when compared with \emph{masked} LMs \cite[Ch.~11]{Jurafsky2025}. The simplest causal LMs are $n$-gram models, which make their prediction based on the last $(n-1)$-tokens of the prompt. The relatively recent large language models, which are deep learning models trained on vast amounts of data, have extended the context used for prediction to thousands or even millions of tokens. Most of today's popular LLMs, such as those in the GPT and Llama families, are autoregressive.  

Let us briefly review some basics and then state the  main results of this work. To start, it is standard for current language models to use special characters called the \emph{beginning-} and \emph{end-of-sentence} tokens, which we will denote by $\bot$ and $\dagger$, respectively. These tokens are usually not visible to the user but indicate to the model the boundary, so to speak, of a text, which is essential during training and when generating output. (In this way, a ``sentence'' may simply be thought of as a string of text built up token by token.) Additionally, the models have a \emph{cutoff} size, or maximum length limit, to prevent unreasonably long outputs or infinite loops. 

So in practice, for a tokenized user input $a$, the model is fed the prompt $x=\bot a$ and generates a probability distribution $p_x$ on its set of tokens. It then samples from that distribution to select a token $a_1$. If $a_1$ is the end-of-sentence token $\dagger$, the model outputs $a$ to the user. Otherwise, the string $xa_1$ is fed back into the model, which again generates a probability distribution $p_{xa}$ on its set of tokens. It samples from that distribution to select a token $a_2$, and the process repeats. If $a_2=\dagger$, the model outputs $aa_1$ to the user. Otherwise, it continues in this fashion until it either selects $\dagger$ or reaches the cutoff size. For convenience, if a string of tokens ends in $\dagger,$ then we will refer to it as a \emph{finished} text. Otherwise, it is \emph{unfinished}. This brief background motivates the earlier claim that the values from $\pi$ indeed form the hom-objects of a category enriched over the unit interval. Concretely, we will show the following main result:\\

\noindent \textbf{Every LM defines an enriched category} (see Proposition \ref{prop:LLM_defines_ecat}). \quad \textit{Every autoregressive language model defines a $[0,1]$-category whose objects are strings of tokens that begin with $\bot$ and may or may not end with $\dagger$, and whose hom-objects are given by $\pi$.} \\

Our consideration of beginning/end-of-sentence tokens as well as cutoff values differentiates this result from the work in \citep{GV2024}, where a similar enriched category-theoretical construction appears.
This will also allow us to prove that although the $\pi(y|x)$ are not \textit{literally} probabilities (for instance, we will see that $\pi(x|x)=1$ for each object $x$), for each unfinished text $x$ the function $\pi(-|x)$ can be regarded as a probability mass function on the set $T(x)$ of \emph{terminating states} of a prompt $x$, which is the set of all strings having $x$ as a prefix that either end with $\dagger$ or reach the cutoff size. Said differently, $T(x)$ is the set of all theoretically possible outputs of a model (including those with negligible probability), given the prompt $x$. This is another contribution of this work:\\

\noindent \textbf{Hom objects restrict to a probability mass function} (see Proposition \ref{prop:prob_terminal}). \quad \textit{Every autoregressive language model determines a probability mass function $\pi(-|x)$ on the set of terminating states of an unfinished text $x$.}\\

It was remarked in \citep{ECTL2022} and \citep{GV2024}  that one can alternatively enrich the category of strings over the extended non-negative reals $[0,\infty]$ by defining the hom-object between strings $x$ and $y$ to be $-\ln\pi(y|x)$. 
 \citet{lawvere73} observed that one can see a $([0,\infty],\geq,+,0)$-enriched category as a \emph{generalized metric space} by interpreting the hom-objects as  distances; the resulting distance function may be non-symmetric and degenerate but still satisfies the triangle inequality. In this article, we extend this geometric perspective by computing a fundamental invariant of enriched categories---their \emph{magnitude}. Magnitude generalizes the cardinality of a set and the Euler characteristic of a topological space, and it can be regarded as the ``size'' of an enriched category \citep{Leinster2008, Leinster2013}.  The computation of magnitude of ordinary metric spaces seen as $[0,\infty]$-enriched categories reveals rich connections with traditional invariants from integral geometry and geometric measure theory such as volume, capacity, dimension, and intrinsic volumes \citep{LeinsterMeckes}. 

In more detail, we study the generalized metric space $\mathcal M$ whose elements are strings $x,y,\ldots$ and whose  distances are given by $d(x,y)=-\ln \pi(y|x)$. We use the approach introduced by 
\citet{rota1964}, later extended by 
\citet{LeinsterShulman2021}, to compute the M\"obius coefficients of $\mathcal M$  (see Proposition \ref{prop:vigneaux}). Then we use these M\"obius coefficients to calculate the \emph{magnitude function} $\Mag(t\mathcal M)$, for $t>0$.\\ 

\noindent\textbf{Magnitude function associated with an LM} (see Proposition \ref{prop:magnitude_L}). \quad \textit{The magnitude function of the generalized metric space $\mathcal{M}$ associated with an autoregressive language model is equal to
\[\Mag(t\mathcal{M})=(t-1)\sum_{x\in\ob(\mathcal{M})\setminus T(\bot)}H_t(p_x) + \# (T(\bot)), \qquad t>0.\]}\\

\noindent In the above expression, 
\begin{enumerate}
    \item $\#(T(\bot))$ is the number of possible outputs of the model,
    \item $p_x$ is the probability distribution over tokens generated by the model prompted by $x$, and
    \item $H_t$ is the  $t$-logarithmic entropy: $H_t(p_1,\ldots,p_m) = (1-\sum_{i=1}^m p_i^t)/(t-1)$.\\
\end{enumerate}    

The entropies $(H_t)_{t\in (0,\infty)\setminus\{1\}}$ were first introduced by 
\citet{Havrda1967} and later popularized in physics by 
\citet{Tsallis1988}. Each  $H_t$ is positive and strictly concave; it vanishes when its argument  concentrates all the probability on a single token (i.e.~represents a Dirac measure) and it is maximal for the uniform distribution \citep{Havrda1967}.  It follows that, when $t>1$, the function $\Mag(t\mathcal{M})$ is lower bounded by $\#(T(\bot))$, and this value is attained by a ``deterministic'' language model that has a preferred continuation for each prompt, with no uncertainty (see Example \ref{ex:deterministic}). Similarly, also when $t>1$, the function $\Mag(t\mathcal{M})$ is upper bounded by 
\[\frac{(t-1)(n^{1-t}-1)}{1-t}\cdot \#(\text{ob}(\mathcal{M})\setminus T(\bot)) + \#(T(\bot))\]
and this value is attained by a completely random language model whose next-token probability distributions are always uniform over the set of all possible continuations. This means that for every string $x$, the probability distribution $p_x$ is uniform on the token set; in this case the $t$-logarithmic entropy $H_t(p_x)$ is equal to $(n^{1-t}-1)/(1-t)$, where $n$ is the size of the token set. In the limit $t\to \infty$, these bounds take the simple form
\[\underbrace{\#(T(\bot))}_{\text{deterministic LM}}  \;\leq \; \lim_{t\to \infty} \Mag(t\mathcal{M}) \; \leq \; \underbrace{\#(\text{ob}(\mathcal{M}))}_{\text{maximally random LM}}.\] 

More generally, in the case $t>1$ the magnitude is always proportional to the amount of uncertainty in the model, as measured by the $t$-logarithmic entropy. In fact, if two language models with associated spaces $\mathcal M_1$ and $\mathcal M_2$ assign the same next-token probabilities (respectively, $p^{(1)}_\bullet$ and $p^{(2)}_\bullet$)  to all prompts but one, called $x'$, in such a way that $H_t(p^{(1)}_{x'}) > H_{t}(p^{(2)}_{x})$, then $\Mag(t\mathcal M_1)> \Mag(t\mathcal M_2)$. 
 
The situation is less clear when $t<1$. For instance, for a maximally random LM, we have $\lim_{t\to 0}\Mag(t\mathcal{M})\ = (1-n) \cdot \#(\text{ob}(\mathcal{M})\setminus T(\bot)) + \#(T(\bot)),$ which is negative (since $n>1$, unless $\dagger$ is the only token besides $\bot$). However, even for metric spaces, this limit might take any value \citep{roff2025small}, thus resisting a straightforward interpretation.

We shall see that the derivative of $\Mag(t\mathcal M)$ at $t=1$ is a sum of Shannon entropies: $\sum_{x\in\ob(\mathcal{M})\setminus T(\bot)}H(p_x)$. Analogous results hold for the subspace $\mathcal M_x$ of texts that extend a given prompt $x$, see Remark \ref{rmk:subspace_Mx}.

Let us also remark that dependence of magnitude on the terminating states is reminiscent of similar results pertaining to posets or metric spaces that show a dependency on the ``boundary.'' For instance, if a finite poset has top and bottom elements, then its magnitude can be computed by evaluating its M\"obius function (defined in Section \ref{sec:magnitude}) at those extremal elements \citep{rota1964}, \cite[Proposition 3.8.5]{stanley2011}. And in the context of metric spaces, \emph{weighting vectors} \citep{Leinster2013}|such as $(\sum_y \mu(x,y))_{x}$  when the M\"obius coefficients  $\mu$ exist|effectively detect the boundary of certain subsets of Euclidean space, an insight with recent applications in machine learning \citep{Bunch2021,adamer2024}; see also \citep{willerton2009}. 

Finally, we will draw a connection to magnitude homology \citep{LeinsterShulman2021}. This is our final main result:\\

\noindent \textbf{Magnitude homology associated with an LM} (see Proposition \ref{prop:expression_magnitude}). \quad \textit{The magnitude function can be expressed in terms of the ranks of the magnitude homology groups. For any $t>0,$
\[ \Mag(t\mathcal M) = \sum_{\ell } e^{-t\ell} 
\sum_{k\geq 0} (-1)^k \operatorname{rank}(H_{k,\ell}(\mathcal M)),
\]
where the first sum ranges over those $\ell$ that may appear as finite lengths of paths in $\mathcal M$. }\\

\noindent This result mirrors an analogous result by 
\citet[Theorem 7.14]{LeinsterShulman2021}, although our proof follows directly from Proposition \ref{prop:vigneaux} and encounters significantly less technical complications.

\subsection*{Structure of the article} 
 Section \ref{sec:definingpi} derives, from a given LM, an enriched category of strings and an associated generalized metric space.  Subsection \ref{ssec:tokens} introduces the basic definitions concerning tokens and texts, including the partial order of texts. Subsection \ref{ssec:induced_probs} describes the process of text generation by an LM, defines the function $\pi$, and shows that $\pi(-|x)$ is a probability mass function over the set $T(x)$ of terminating states. Subsection \ref{ssec:catprelims} reminds the reader of some basic definitions from enriched category theory, focusing on categories enriched over commutative monoidal preorders, which are a particularly simple case.  Subsection \ref{ssec:cat_enriched_interval} shows that $\pi$ defines a $[0,1]$-category $\mathcal L$ of texts in the sense of  \citep{ECTL2022}.  Subsection \ref{ssec:cat_enriched_reals} introduces the corresponding generalized metric space $\mathcal M$ using the isomorphism of categories $-\ln:[0,1]\to[0,\infty]$. 
 
 Section \ref{sec:magnitude} is dedicated to magnitude. Subsection \ref{sec:def_magnitude} gives a short overview of categorical magnitude, and Subsection \ref{sec:mag_posets} treats the classical case of posets.  Subsection \ref{ssec:magnitude_main} deals with the computation of the M\"obius coefficients and magnitude of $\mathcal M$.   Subsection \ref{ssec:homology} covers the magnitude homology of $\mathcal M$. Finally, Section \ref{sec:summary} presents some final remarks and perspectives.

\section{Obtaining enriched categories from an LM}
\label{sec:definingpi}

The goal of this section is to show that every LM defines a category enriched over the unit interval that, in turn, gives rise to a generalized metric space. In leading up to these results, we begin with a thin category whose objects are strings of symbols from a finite alphabet that start with a beginning-of-sentence token and that may or may not end with an end-of-sentence token. Morphisms are provided by substring containment. We then enrich this setting by constructing $[0,1]$-hom objects obtained from the probabilities generated by a given LM and also consider a $[0,\infty]$-enriched analogue.

\subsection{Tokens and texts}
\label{ssec:tokens}
Let $A$ be a finite alphabet (a set of tokens) and consider all
finite strings $a,b,\ldots$ from the free monoid $A^*$. In LMs, tokens can correspond to words, pieces of words, or special characters, depending on the model, and hence longer texts (e.g.~sentences, paragraphs) are elements of $A^*$. Below, we call elements of $A$  \emph{tokens} and elements of $A^*$ \emph{strings} or \emph{texts}.  Write $a\leq b$ whenever $a$ is a \emph{prefix} of $b$, that is, whenever $b=aa'$ for some $a'\in A^*$. This defines a partial order since $\leq$ is reflexive (as $A^*$ contains the empty string $\epsilon$), transitive, and antisymmetric. We denote by $|a|$ the length of a string $a\in A^*$. 

Besides the elements of $A$, we introduce two special tokens $\bot$ and $\dagger$ which represent, respectively, the so-called ``beginning-of-sentence'' and ``end-of-sentence'' tokens. Unlike the elements in $A$, the tokens $\bot$ and $\dagger$ do not appear in arbitrary positions; their role is clarified below. Although some systems conflate both symbols, it is useful to keep the distinction here. 

We regard a partially ordered set  $(P,\leq)$ as a thin category with object set $P$ and such that $x\to y$ if and only if $x\leq y$.  Given $N\in \mathbb N$, we define $\mathsf{L}:= \mathsf{L}^{\leq N}$ as the full subcategory of $((A\cup\{\bot, \dagger\})^* ,\leq)$ made of strings that start with $\bot$ and are followed by at most $N-1$ symbols, all of them in $A$ with the exception of the last one, which might equal $\dagger$; in other terms,
\[\ob(\mathsf{L})=\{\bot a : a\in A^* \text{ and } |a|\leq N-1\}\sqcup \{\bot a \dagger : a\in A^* \text{ and } |a|<N-1\}.\]
More explicitly, given objects $x,y$ of $\mathsf{L}$, there is a morphism $x \to y$ whenever $x\leq y$ as elements of $(A\cup\{\bot, \dagger\})^*$, that is, if $x$ is a prefix of $y$.  
We refer to an object of the form $\bot a$ as an \emph{unfinished text} and to an object of the form $\bot a \dagger$ as a \emph{finished text}. 
For any non-identity morphism $x\to y$, the prefix $x$ is necessarily an unfinished text, whereas $y$ can be finished or unfinished. 
Observe that $\mathsf{L}$ has an initial object,\footnote{By this point, the reader may have wondered why $\dagger$ is used as the end-of-sentence symbol instead of $\top$, given our use of $\bot$ for the beginning-of-sentence symbol. To start, the category $\mathsf{L}$ does not have a terminal object, so we wish to avoid using notation that suggests otherwise. Further, one might think of the end-of-sentence token as the state which ``kills'' the generation of a string, hence a dagger.} which is $\bot$, and also that it has a natural ``grading'' given by the length $|x|$ of strings $x$. We remark that $|\bot|=1$. 

It will be convenient to write the set $\ob(\mathsf{L})$ as a disjoint union $\bigsqcup_{j= 0}^{N-1}\mathsf{L}^{(j)}$, where $\mathsf L^{(j)}$ consists of strings of length $1+j$. (Remark that $\mathsf L$ is a free category on a graph, so $j$ measures the graph-theoretic distance of the objects in $L^{(j)}$ to $\bot$.)  Aside from identities, the arrows of $\mathsf L$ only go from elements of $\mathsf{L}^{(i)}$ to elements of $\mathsf{L}^{(j)}$ for $j>i$. 
Similarly, given a string $x$ in $\mathsf L$, there is a full subcategory $\mathsf{L}_x$ of $\mathsf{L}$ whose objects are $y\in \mathsf L$ such that $x\to y$; in this case, we have $\ob(\mathsf L_x) = \bigsqcup_{j= 0}^{N-|x|} \mathsf{L}_x^{(j)}$ , where  $\mathsf{L}_x^{(j)}$ consists only of those strings of length $|x|+j$, namely, those strings that extend $x$ on the right by $j$ tokens. So, $\mathsf{L}_x^{(0)}=\{x\}$ and  $\mathsf{L}_x^{(1)}=\{xa_1:a_1\in A\cup\{\dagger\}\}$ and $\mathsf{L}_x^{(2)}=\{xa_1a_2:a_1\in A, a_2\in A\cup\{\dagger\}\}$ and so on. Remark that $\mathsf L_\bot^{(j)} = \mathsf L^{(j)}$ for any $j\geq 0$, and if $x$ is a finished text, then $\mathsf{L}_x$ has a single object, namely $x.$ 

We will refer to $\mathsf{L}$ as a poset or as a category interchangeably. Either way, it keeps track of which strings are right extensions of other strings in a language. To incorporate statistical information, we turn to enriched category theory in the following sections.

\subsection{LMs and induced probabilities on texts}
\label{ssec:induced_probs}
We characterize the behavior of an autoregressive language model   (e.g.~GPT, Llama) as follows. For any tokenized user input $a\in A^*$,  the model is fed the  prompt $x=\bot a$ and generates a probability distribution $p_x:=p(-|x)\colon A\cup \{\dagger\}\to[0,1]$.    
After generating $p_x$, the model then samples a token  $a_1$ according to $p_x$. If $a_1 = \dagger$, then the process terminates and the model outputs $a$. Otherwise, the token $a_1$ is appended to $a$. These steps are then repeated. The model generates a probability distribution $p_{\bot aa_1}$ on $A\cup \{\dagger\}$, samples a token $a_2$ according to it, and either terminates execution if $a_2 = \dagger$ (outputting $aa_1$) or appends $a_2$ to $aa_1$ to generate $p_{\bot aa_1a_2}$, and so on.

We assume that the process stops when $\dagger$ is sampled or when the extended prompt $y = xa_1 \cdots a_{N-|x|}$ reaches a maximum length $N$; we call $N$ the  \emph{cutoff}. We can think of $N-1$ as the context size of the language model, i.e.~the maximum length of a string $x$ that can be used to generate a distribution $p_x$. Below are some common examples of autoregressive language models.

\begin{example}\label{ex:deterministic}
We say that a language model is \emph{deterministic} if, for any possible prompt $x$, there is $a=a(x)\in A\cup\{\dagger\}$ such that $p_x(a)=1$ (and therefore $p_x(a')=0$ for any $a'\in (A\cup\{\dagger\})\setminus\{a\}$). Although this example is quite artificial, it will appear as an extreme case for our magnitude calculations. 
\end{example}

\begin{example} \label{ex:markov_chain}
If $p_{\bot a_1 \cdots a_n}(-)$ only depends on $a_n$ for any $n\geq 1$, then the LM corresponds to a \emph{Markov chain}. In the terminology of Markov chains (see e.g.~\cite[Ch.~6]{Grimmett2020}), $\tilde A = A\cup \{\dagger\}$ is the set of states and $\dagger$ is an absorbing state. The numbers $P_n(a_n, a_{n+1}) := p_{\bot a_1 \cdots a_n}(a_{n+1})$ form a matrix $P_n:\tilde A\times \tilde A \to [0,1]$, which has positive entries and is \emph{stochastic}, meaning that each row sums to $1$, i.e.~$\sum_{a'\in \tilde A} P_n(a,a')=1$. By convention we are assuming here that $P_n(\dagger, \dagger) = 1$ and $P_n(\dagger, a) = 0$ for any $a\in A$, which makes $\dagger$ an absorbing state.  

Later in Section \ref{sec:summary}, we will briefly remark on a notion called \emph{categorical diversity} in connection with homogeneous and irreducible Markov chains. If for all $n>2$ it holds that $P_n = P_1$, then the chain is said to be \emph{homogeneous}. A Markov chain is \emph{irreducible} if there is a nontrivial probability of going from any state $a\in A$ to any state $a'\in A$ in a finite number of steps. A chain with an absorbing state is never irreducible. However, if $P_n(a,\dagger)=0$ for every $n\geq 1$ and $a\in A$, then $P_n|_{A\times A}$ is also stochastic and $(P_n)_{n\geq 1}$ defines a Markov chain with state space $A$. When this resulting chain is homogeneous and irreducible, it necessarily has a stationary distribution (because $A$ is finite, see \cite[Sec. 6.6]{Grimmett2020}), which is a probability vector $q:A\to [0,1]$ such that $\sum_{a'\in A} q(a) P(a,a') = q(a')$. In this setting, we will see in Section \ref{sec:summary} that if additionally $p_{\bot}(a) = q(a)$, then we can recover the Kolmogorov--Sinai entropy of the Markov chain from the categorical diversity. 
\end{example}

\begin{example}
An \emph{$n$-gram model} is a Markov chain ``with memory'' of order $n-1$, in the sense that $p_{\bot a_1 \cdots a_i} (-)$ only depends on the last $n-1$ tokens of $\bot a_1 \cdots a_i$ (or all of them if $i< n-2$). For details see \cite[Ch.~3]{Jurafsky2025}.  
\end{example}

\begin{example}
    A \emph{large language model} is a deep neural network that implements a function $p^{(\theta)}:A^* \to \Delta(A), \quad a\mapsto p^{(\theta)}_{\bot a} (-)$. Here $\Delta(A)$ denotes the set of probability mass functions on $A$, and $\theta$  the network's parameters (connection weights between neurons) and  hyper-parameters. In the case of autoregressive LLMs, the deep neural network usually implements a version of the decoder-only transformer architecture \citep{Vaswani2017, Radford2019}. The network is trained on a self-supervised manner: the goal is to predict as well as possible the last token $a_n$ from $(a_1,...,a_{n-1}$), given any tokenized fragment of text $(a_1,...,a_n)$ in a corpus $C\subset A^*$. For details see \cite[Ch.~10]{Jurafsky2025}. 
\end{example}

In general, it will be useful to consider the set of all possible outputs of an LM corresponding to a given input. In this vein, we define the set of \emph{terminating states} for a prompt $x=\bot a$ as follows.

\begin{definition}\label{def:term_states}
The set of \emph{terminating states} of an unfinished text $x$ in $\mathsf{L}$ is defined to be
\begin{multline*}
    T(x) = \{ y \in \ob(\mathsf L_x) : 
    y\text{ is an unfinished text of length }N \text{ or }\\ y \text{ is a finished text such that } |y|\leq N\}. 
\end{multline*}
\end{definition}
In the first case, $y=xa'$ for some $a'\in A^*$ such that $|a'|=N-|x|$; 
for such a terminating state the model outputs $aa'$ to the user according to some probability distribution. In the second case, $y=xa''\dagger$ for some $a''\in A^{*}$ such that $|a''|\leq N-|x|-1$; for such a terminating state the model outputs $aa''$ according to some probability distribution. Either way, there is a bijective correspondence between the terminating states and the theoretically possible outputs of the model, including those with small, or even zero, probabilities.\footnote{Notice that this description is only valid for prompts whose length is strictly less than $N$:  to extend a prompt $x$ of length greater than $N-1$, one needs to produce first a subrogate prompt $f(x)$ such that $|f(x)| \leq N-1$ and feed it to the language model to generate $p_{f(x)}$ and sample a token.} The phrase ``theoretically possible''  is used to emphasize that the set $T(x)$ depends only on the category of strings $\mathsf{L}$ and is independent of a choice of language model. 

Now, it is tempting to define the probability of the model's output $b$ given the user input $a$  as the product of the intermediate probabilities of the tokens involved in its production. For example, when $N>5$, the probability of generating the output $b=a_1a_2a_3a_4a_5\in A^*$ given the user's input $a=a_1a_2$ would be
\[p(a_3|\bot a)p(a_4|\bot a a_3)p(a_5|\bot a a_3 a_4 )p(\dagger|\bot a a_3a_4a_5).\] However, it is not obvious that this rule indeed defines a probability mass function over some set.  We prove this below. But first, let us define these products in general.  

\begin{definition}\label{def:pi}
 For any objects $x$ and $y$ of $\mathsf{L}$, define
\begin{equation}\label{eq:pi}
\pi(y|x):=
\begin{cases}
    1 & \text{if } x=y\\[5pt]
    0 & \text{if } x\not\to y\\[5pt]
    \displaystyle\prod_{i=1}^{k}p(a_{t+i}|y_{<t+i}) & \text{if } x\to y\\[5pt]
\end{cases}.
\end{equation}
In the third case, we assume that $x= \bot a_1\cdots a_t$ and $y =xa_{t+1}\cdots a_{t+k}$ for some $k\geq 1$, $(a_i)_{i=1}^{t+k-1} \subset A$, and  $a_{t+k} \in A\cup\{\dagger\}$. The symbol $y_{<t+i}$ denotes the string $\bot a_1\cdots a_{t+i-1}$, and $y_{<t+1}=x$. 
\end{definition}

For a given string $x$ in $\mathsf{L}$, the function $\pi(-|x)$ is not a probability mass function on its whole support. (To begin with, $\pi(x|x)=1$.) Nonetheless, it becomes a probability mass function when restricted to $T(x)$.

\begin{proposition}\label{prop:prob_terminal}
    Every autoregressive language model determines a probability mass function $\pi(-|x)$ on the set $T(x)$ of terminating states of an unfinished text $x$. 
\end{proposition}
\begin{proof}
    If $m=0$, then $T(x) = \{x\}$ and by definition $\pi(x|x)=1$. If $m=1$, then $T(x)=\{xa\mid a\in A\cup \{\dagger\}\}$ and 
    $$\sum_{y\in T(x)} \pi(y|x) = \sum_{a\in A\cup \{\dagger\}} p_x(a) =1,$$
    since $p_x$ is assumed to be a probability mass function on $A\cup\{\dagger\}$.
For general $m\geq 1$, 
    \begin{align}
        \sum_{y\in T(x)} \pi(y|x) &= \sum_{i=0}^{m-1} \sum_{a'\in A^i} \pi(xa'\dagger |x) + \sum_{a\in A^{m}} \pi(xa|x) \label{eq:decomp_T} \\
       &= \sum_{i=0}^{m-1} \sum_{a'\in A^i} \pi(xa'\dagger |x)  +  \sum_{\substack{ a=a'a''\\a'\in A^{m-1},\, a''\in A}} p(a''|xa')\pi(xa'|x)  \label{eq:decomp_a}  \\
       &= \sum_{i=0}^{m-2} \sum_{a'\in A^i} \pi(xa'\dagger |x) + \sum_{a'\in A^{m-1}} \pi(xa'|x) \sum_{a''\in A\cup \{\dagger\}} p(a''|xa') \label{eq:reordering} \\
       &= \sum_{i=0}^{m-2} \sum_{a'\in A^i} \pi(xa'\dagger |x) + \sum_{a'\in A^{m-1}} \pi(xa'|x) \label{eq:previous_step}
    \end{align}
   The two sums in \eqref{eq:decomp_T} follow from the definition of $T(x)$ and $\pi$, where the first sum accounts for finished texts (i.e.~those ending in $\dagger)$, and the second sum accounts for unfinished texts of length $N$. We obtain \eqref{eq:decomp_a} by identifying $a\in A^m$ with $(a',a'')\in A^{m-1}\times A$. Then \eqref{eq:reordering} follows by rewriting the term corresponding to $i=m-1$ in the leftmost sum as $\pi(xa'|x)p(\dagger|xa')$, where $a'\in A^{m-1}$, and then moving it to the rightmost sum. The expression in \eqref{eq:previous_step} equals one in virtue of the induction hypothesis. 
\end{proof}

\subsection{Preliminaries on enriched category theory}
\label{ssec:catprelims}

We shall now use $\pi$ to derive an enriched category of strings $\mathcal L$ from $\mathsf L$. For the convenience of the reader, we give here the definition of a category enriched over a commutative monoidal preorder \cite[Chapter 2]{fong2019invitation}, which is the only case we will need. See also  \citep{Kelly82}. 

\begin{definition}
A \emph{commutative monoidal preorder} $(\mathcal{V},\leq,\otimes,1)$ is a preordered set $(\mathcal{V},\leq)$ and a commutative monoid $(\mathcal{V},\otimes,1)$ satisfying $x\otimes y\leq x'\otimes y'$ whenever $x\leq x'$ and $y\leq y'.$
\end{definition}

\begin{example} The unit interval $([0,1],\leq,\cdot,1)$ is a commutative monoidal preorder with the usual ordering $\leq$. Multiplication of real numbers is the monoidal product, which we will denote by juxtaposition, $ab:=a\cdot b$ for $a,b\in [0,1]$, and the monoidal unit is 1. 
\end{example}

\begin{example}
The extended non-negative reals $([0,\infty],\geq,+,0)$ form a commutative monoidal preorder where the preorder is the opposite of the usual ordering on the reals. The monoidal product is addition with $a+\infty:=\infty$ and $\infty+a:=\infty$ for all $a\in[0,\infty]$, and the monoidal unit is 0. 
\end{example}

\begin{definition}\label{def:enrichedcat}
    Let $(\mathcal{V},\leq,\otimes,1)$ be a commutative monoidal preorder. A (small) \emph{category enriched over $\mathcal{V}$}, or simply a \emph{$\mathcal{V}$-category}, $\mathcal{C}$ consists of a set $\ob(\mathcal{C})$ of objects and, for every pair of objects $x$ and $y$, an object $\mathcal{C}(x,y)$ of $\mathcal{V}$ called a \emph{$\mathcal{V}$-hom object} satisfying the following: for all objects $x,y,z\in\ob(\mathcal{C})$,
    \begin{align*}
    1\leq \mathcal{C}(x,x) \\
    \mathcal{C}(y,z)\otimes\mathcal{C}(x,y)\leq \mathcal{C}(x,z)
    \end{align*}
\end{definition}

Sections \ref{ssec:cat_enriched_interval} and \ref{ssec:cat_enriched_reals} below give examples of categories enriched over both $[0,1]$ and $[0,\infty]$ defined from the probabilities generated by a  language model. As we will see in the setup introduced in Subsections \ref{ssec:tokens} and \ref{ssec:induced_probs}, objects of both categories will consist of strings of characters from some finite token set.

\subsection{Every LM defines a $[0,1]$-category}\label{ssec:cat_enriched_interval}
For convenience, let us recall the setup established in Subsections \ref{ssec:tokens} and \ref{ssec:induced_probs}. As described there, $A$ denotes a finite alphabet, such as the token set of a given LM, and $\mathsf{L}$ denotes the finite category whose objects are strings from $A$ that begin with $\bot$ and are followed by at most $N-1$ symbols, where the last symbol may or may not be $\dagger$. Here $N$ denotes the model's cutoff size. Moreover, there is a morphism $x\to y$ if $x$ is a prefix of $y$. Further recall that for each pair of strings $x,y$ in $\mathsf{L}$, Definition \ref{def:pi} specifies the value $\pi(y|x)\in[0,1]$ which, if $y$ is an extension of $x$, is equal to the product of the successive probabilities given by the model when generating $y$ from $x$ one token at a time. Otherwise, if $x$ is not contained in $y$, then $\pi(y|x)=0$, and if $x=y$ then $\pi(x|x)=1$.

This allows us to now define a category $\mathcal{L}$ enriched over the unit interval. The objects of $\mathcal L$ coincide with the objects of $\mathsf L$, and for a pair of objects $x$ and $y$ we define their $[0,1]$-hom object to be $\mathcal{L}(x,y):=\pi(y|x).$  

Let us verify that $\mathcal{L}$ satisfies both the identity and compositionality requirements for a category enriched over $[0,1]$ listed  in Definition \ref{def:enrichedcat}. Suppose first that $x,y,z$ are three strings satisfying $x\to y\to z$.
If $x\neq y$ and $y\neq z$, then $x$ and $y$ are necessarily unfinished texts, and we may write
\begin{align*}
x&= \bot a_1\cdots a_t\\
y&=\bot a_1\cdots a_t\cdots a_{t+k}\\
z&=\bot a_1\cdots a_t\cdots a_{t+k}\cdots a_{t+k+k'}.
\end{align*}
for some $(a_i)_{i=1}^{t+k+k'-1}\subset A$ and $a_{t+k+k'}\in A\cup\{\dagger\}$. It then follows from  \eqref{eq:pi} that
\begin{align}\label{eq:compositionality}
\pi(y|x)\pi(z|y)&=\prod_{i=1}^{k}p(a_{t+i}|y_{<t+i})\prod_{j=1}^{k'}p(a_{t+k+j}|z_{<t+k+j})\\[5pt]  \nonumber 
&=\prod_{i=1}^{k+k'}p(a_{t+i}|z_{<t+i})\\[5pt]   \nonumber 
&=\pi(z|x).
\end{align}
If instead $x=y$, then $\pi(y|x)=1$ and obviously $\pi(z|y) = \pi(z|x)$. The case $z=y$ is treated similarly. More generally, if $x,y$ and $z$ are arbitrary objects of $\mathsf L$, one has that $\pi(y|x)\pi(z|y)\leq \pi(z|x)$ as it may be the case that $\pi(y|x),\pi(z|x)\neq 0$ but $\pi(z|y)=0$,\footnote{For example, consider when $x=\bot\texttt{green}$ and $y=\bot\texttt{green lantern}$ and $z=\bot\texttt{green salad}$.} or that $\pi(z|y),\pi(z|x)\neq 0$ but $\pi(y|x)=0$.\footnote{For example, consider when $x=\bot\texttt{movie tic}$ and $y=\bot\texttt{movie}$ and $z=\bot\texttt{movie ticket}$.} In summary, we come to the following proposition.

\begin{proposition}\label{prop:LLM_defines_ecat}
Every autoregressive language model defines a $[0,1]$-category whose objects are those of $\mathsf L$ and whose hom-objects are given by $\pi$.
\end{proposition}

A more general version of this $[0,1]$-category was originally defined in \cite[Definition 4]{ECTL2022}, where  $\mathcal{L}(x,y)$ was taken to be nonzero whenever $x$ was an \textit{arbitrary} substring of $y$ (not strictly a prefix). However, the values $\pi(y|x)$ were not constructed explicitly from an LM, whereas Definition \ref{def:pi} above yields a $[0,1]$-category of expressions in a language where the values $\pi(y|x)$ are described concretely from the probabilities generated by an LM. A similar definition can be found in \citep{GV2024}, although neither their description nor the one in \citep{ECTL2022} considers the special characters $\bot$ and $\dagger$ or a model's cutoff value. The incorporation of these aspects led us to the novel Proposition \ref{prop:prob_terminal}, which \emph{justifies referring to $\pi$ as a probability.}

Notice we only consider extensions of expressions \textit{on the right,} having in mind the most standard, autoregressive LLMs, although one could also consider bidirectional extensions. That said, the tree-like structure implied by the restriction to right extensions plays a key role in the computation of magnitude presented below.

Finally, let us also remark that, after defining the $[0,1]$-category $\mathcal{L}$ of strings, the authors of \citep{ECTL2022} further consider enriched copresheaves on $\mathcal{L}$, which were shown to contain semantic information. Our present goal, however, is to turn our attention away from $\mathcal{L}$ to a more geometric version of it.

\subsection{Every LM defines a $[0,\infty]$-category}
\label{ssec:cat_enriched_reals}
As discussed in \cite[Section 5]{ECTL2022}, the function $-\ln\colon [0,1]\to[0,\infty]$ is an isomorphism of categories, which provides a passage from probabilities to a more geometric setting. That is, instead of considering an enrichment from probabilities involved in generating a string $y$ from a prefix $x$, one could instead think of the ``distance'' traveled from $x$ to $y$ by defining
\[d(x,y):=-\ln\pi(y|x).\]
It is straightforward to check the triangle inequality is satisfied $d(x,y)+d(y,z)\geq d(x,z)$ and further that $d(x,x)=0$ for all strings $x,y,z.$ (This follows from Equation \eqref{eq:compositionality} and the fact that $\pi(x|x)=1$ for all $x$.) From this perspective, a text that is highly likely to extend a prompt $x$ is close to $x$, and a text that is not an extension of $x$ is infinitely far way. In this way, we obtain a category $\mathcal{M}$ enriched over $[0,\infty]$, also known as a \emph{generalized metric space} in the sense of 
\citet{lawvere73}, by defining the $[0,\infty]$-hom object between a pair of strings $x,y$ to be the distance $\mathcal{M}(x,y):=d(x,y)$. And now that we have a generalized metric space, we may inquire after its magnitude.

\section{Magnitude}
\label{sec:magnitude}

The theory behind the magnitude of (generalized) metric spaces is well known \citep{Leinster2013} and is a special case of the magnitude of enriched categories \citep{LeinsterShulman2021,LeinsterMeckes}. Importantly, the theory requires working with a \textit{finite} enriched category, which is an additional reason to consider the category $\mathsf L^{\leq N}$ with finite cutoff $N$. In this section, then, we provide a few preliminaries before computing the magnitude of the generalized metric space $\mathcal{M}$. We begin by reviewing the definition of magnitude in the context of enriched categories and in the classical context of posets. 

\subsection{Definition}\label{sec:def_magnitude}
Magnitude is a numerical invariant of a category enriched over a monoidal category $(\mathcal{V},\otimes, 1)$ \citep{Leinster2013,LeinsterShulman2021,LeinsterMeckes}. Briefly, one starts with a semi-ring $R$ and a multiplicative
function $\lVert- \rVert\colon \ob(\mathcal{V})\to R$ called \emph{size} that is invariant under isomorphisms. So, $\lVert v \rVert=\lVert w \rVert$ whenever $v\cong w$ in $\mathcal{V}$, and $\lVert 1 \rVert=1$ and $\lVert v\otimes w \rVert=\lVert v \rVert \lVert w  \rVert$ for all $v,w\in\ob(\mathcal{V})$. Then, given a $\mathcal{V}$-category $\mathcal{C}$ with finitely many objects, one introduces the \emph{zeta function} $\zeta_\mathcal{C}\colon \ob(\mathcal{C})\times\ob(\mathcal{C})\to R$, defined by 
$\zeta_\mathcal{C}(x,y):=\lVert\mathcal{C}(x,y) \rVert$. The function $\zeta_\mathcal{C}$ can be regarded as a square matrix with index set $\ob(\mathcal C)$ \citep{Leinster2013}; when it is invertible,  $\mathcal{C}$ is said to have \emph{M\"obius inversion} and its \emph{M\"obius function} is $\mu_\mathcal{C}:=\zeta_\mathcal{C}^{-1}$ (the entries of this matrix are called \emph{M\"obius coefficients}). When $\mathcal C$ has M\"obius inversion, the \emph{magnitude} $\Mag(\mathcal{C})$ of $\mathcal{C}$ can be defined as
\begin{equation}\label{eq:magnitude}
\Mag(\mathcal{C})=\sum_{(x,y)\in \ob(\mathcal{C})\times\ob(\mathcal{C})}\zeta_\mathcal{C}^{-1}(x,y).
\end{equation}

For $\mathcal{V}=[0,\infty]$, every size function is of the form $\lVert x \rVert_t = e^{-tx}$ for some $t\in \mathbb R \cup\{\infty\}$. It is customary to choose $\lVert  - \rVert_1$ and then consider the \emph{ magnitude function} $f(t):=\Mag(t\mathcal{C})$,  defined for $t\in (0,\infty)$ \cite[Section 2.2]{Leinster2013}.  The symbol $t\mathcal{C}$ denotes the generalized metric space with the same objects as $\mathcal{C}$ but whose distances (that is, whose $[0,\infty]$-hom objects) are scaled by $t$.

\subsection{Magnitude of posets}\label{sec:mag_posets} The magnitude of an enriched category stems from a classical story from posets, which we now briefly review from \citep{stanley2011}. 

Given a poset $P$, define a \emph{closed interval} to be $[s,t]=\{u\in P:s\leq u \leq t\}$ whenever $s\leq t$ in $P$.
If every interval of $P$ is finite, then $P$ is said to be a \emph{locally finite} poset. 

Let $\mathbb{k}$ be a field, $P$  a locally finite poset,  and  $\Int(P)$ the set of all closed intervals of $P$.  The \emph{incidence algebra} $I(P,\mathbb{k})$ of $P$ over $\mathbb{k}$ is the $\mathbb{k}$-algebra of all functions $f\colon \Int(P)\to\mathbb{k}$ with the usual structure of a vector space over $\mathbb{k}$, where multiplication is given by the \emph{convolution product}, 
\[(f\ast g)(s,u):=\sum_{s\leq t \leq u}f(s,t)g(t,u),\]
where we write $f(s,t)$ for $f([s,t])$. (Remark that the sum has finitely many terms.) The incidence algebra of $P$ is an associative algebra with identity $\delta\colon\Int(P)~\to~\mathbb{k}$ given by
\[
\delta(s,u)=
\begin{cases}
    1 &\text{if }s=u\\
    0 &\text{if }s\neq u
\end{cases}
\]
since $f\ast \delta=\delta \ast f=f$ for all functions $f\in I(P,\mathbb{k})$. The  \emph{zeta function} $\zeta_P$ is another notable function in the incidence algebra, defined to be constant at $1$ on every interval,
\[\zeta_P(s,u)=1, \quad \text{for all } s\leq u \text{ in } P.\]
Observe that this coincides with the zeta function defined in the enriched categorical setting, since every poset $P$ (and in particular the poset $\mathsf{L}$ of strings) may be viewed as a category enriched over truth values $\mathbf 2:=\{\texttt{true, false}\}.$ Its objects are the elements of $P$, and given $s,u\in P$, if $s\leq u$, then their corresponding hom object $P(s,u)$ is \texttt{true}, and if $s\not\leq u$ then it is \texttt{false}. The identity and compositionality requirements in Definition \ref{def:enrichedcat} arise from the reflexivity and transitivity of the partial order and by considering the size function $\lVert -\rVert\colon \ob(\mathbf 2)\to\mathbb{Z}$ given by $\lVert \texttt{true}\rVert=1$ and $\lVert\texttt{false}\rVert=0$ and then setting $\zeta_P(s,u)=\lVert P(s,u)\rVert$.

For every locally finite poset $P$, the function $\zeta_P$ is an invertible element of the incidence algebra $I(P,\mathbb{k})$  \cite[Prop. 3.6.2]{stanley2011}. Equivalently, the zeta function is invertible when regarded as a square matrix with index set $P$, cf.~\citep{Leinster2013}; we have taken this matricial perspective in our presentation of the more general categorical definition in Subsection \ref{sec:def_magnitude}. The inverse of $\zeta_P$ is called the \emph{M\"obius function} of $P$, denoted $\mu_P$, and it can be computed recursively: 
\[
\mu_P(s,u) = 
\begin{cases}
1 & \text{if } s=u\\[5pt]
-\displaystyle\sum_{s\leq t < u}\mu_P(s,t) &\text{if } s< u\\[5pt]
0 &\text{otherwise}.
\end{cases}
\]

\begin{example}\label{ex:mobius}
The M\"obius function $\mu_{\mathsf{L}}$ on the poset $\mathsf{L}$ of strings is rather simple. This is due to the tree-like structure of the full subcategory $\mathsf{L}_x$ of $\mathsf{L}$ introduced in Subsection~\ref{ssec:tokens}, whose objects are all strings $y$ having a given string $x$ as a prefix. Indeed, for any string $x\in \ob (\mathsf L)$ and tokens $a_1,a_2,\ldots\in A\cup\{\dagger\}$, we have
\begin{align*}
\mu_{\mathsf{L}}(x,x)&=1\\
\mu_{\mathsf{L}}(x,xa_1)&=-\mu_{\mathsf{L}}(x,x)=-1\\
\mu_{\mathsf{L}}(x,xa_1a_2)&=-\mu_{\mathsf{L}}(x,x)-\mu_{\mathsf{L}}(x,xa_1)=-1+1=0\\
\mu_{\mathsf{L}}(x,xa_1a_2a_3)&=-\mu_{\mathsf{L}}(x,x)-\mu_{\mathsf{L}}(x,xa_1) -\mu_{\mathsf{L}}(x,xa_1a_2)=-1+1+0=0.
\end{align*}
Similarly, $\mu_{\mathsf{L}}(x,xa_1a_2\cdots a_j)=0$ for $j>1$. (As one of the reviewers pointed out, these identities hold for any free category, so in particular for $\mathsf L$, which is a free category on a tree rooted in $\bot$.) 
\end{example}

This M\"obius function may then be used to compute the magnitude of the finite category $\mathsf{L}$ of strings from a set $A$ of tokens.

\begin{example}\label{ex:mag_of_L} Since the category $\mathsf{L}$ has an initial object, we know its magnitude is 1, cf.~\cite[Example 2.3]{Leinster2008}, but it is also instructive to prove this via an explicit computation of the Möbius function.  Although we recommend the reader performs this computation by herself, we also include it here for the sake of completeness.  To that end, let $\#A$ be the cardinality of the token set $A$ and let $\#\ob(\mathsf{L})$ be the number of strings in the finite poset $\mathsf{L}$ considered as a category. The magnitude of $\mathsf{L}$ is  given by
\begin{align*}
    \Mag(\mathsf{L})&=\sum_{x,y\in\ob(\mathsf{L})}\zeta_\mathsf{L}^{-1}(x,y)\\[5pt]
    &=\sum_{x,y\in\ob(\mathsf{L})}\mu_{\mathsf{L}}(x,y)\\[5pt]
    &=\sum_{x\in\ob(\mathsf{L})}\mu_{\mathsf{L}}(x,x) + \sum_{\substack{x=\bot a\in\ob(\mathsf{L})\\[3pt] a\in \bigcup_{i=0}^{N-2} A^i}} \; \sum_{a_1\in A\cup\{\dagger\}}\mu_{\mathsf{L}}(x,xa_1)\\[5pt]
    &=\#\ob(\mathsf{L})- \#\left\{x\in\ob(\mathsf{L}):x=\bot a \text{ with }a\in \bigcup_{i=0}^{N-2} A^i\right\} (\#A+1).
\end{align*}
Unwinding the last line, recall that $\mathsf{L}$ denotes $\mathsf{L}^{\leq N} = \bigsqcup_{i= 0}^{N-1} \mathsf{L}^{(i)}$. We have $\mathsf{L}^{(0)}=\{\bot\}$ and, for each $i\leq 1$,  $\mathsf{L}^{(i)}$ is comprised of strings of the form $\bot a$ with $a\in A^{i}$ or $\bot a'\dagger$ with $a'\in A^{i-1}$. It follows that $ \#\mathsf{L}^{(i)} = \#A^i + \#A^{i-1}$ and so
\begin{align*}
    \#\ob(\mathsf{L}) &= 1 + \sum_{i=1}^{N-1}\#\mathsf{L}^{(i)} \\
    &= 1+ \sum_{i=1}^{N-1} \#A^i + \#A^{i-1} = \#A^{N-1} + 2(\#A^{N-2}) + \cdots +2(\#A) + 2.
\end{align*}
Furthermore, 
\begin{multline*}
    \#\left\{x\in\ob(\mathsf{L}):x=\bot a \text{ with }a\in \bigcup_{i=0}^{N-2} A^i\right\} (\#A+1)= \left( \sum_{i=0}^{N-2} \#A^i\right) (\#A+1) \\= \#A^{N-1} + 2(\#A^{N-2}) + \cdots + 2(\#A) + 1.
\end{multline*}
Therefore, $\Mag(\mathsf{L}) = 1$.
\end{example}

\subsection{Main results}
\label{ssec:magnitude_main}
We are now ready to compute the magnitude of the generalized metric space associated with an LM. As before, let $\mathcal{M}$ be the $[0,\infty]$-category of strings introduced in Subsection \ref{ssec:cat_enriched_reals}, whose objects are $\ob(\mathcal M) = \ob(\mathsf L^{\leq N})$ and where the $[0,\infty]$-hom objects are given by $\mathcal M(x,y) =  d(x,y)= -\ln \pi(y|x)$.   Define the \emph{zeta matrix} $\zeta_\mathcal{M}\colon\ob(\mathcal{M})\times~\ob(\mathcal{M})~\to~\mathbb{R}$ to be
\[\zeta_{\mathcal{M}}(x,y):=e^{-d(x,y)}=\pi(y|x),\]
and more generally for any real number $t>0$ define
\[(\zeta_\mathcal{M})_t(x,y):=e^{-td(x,y)}=\pi(y|x)^t.\]
It is understood that $d(x,y)=\infty$ whenever $\pi(y|x)=0$; for instance, the distance from a finished text $x$ to any other string $y$ is infinite.
For simplicity, we will write $\zeta_t$ instead of $(\zeta_\mathcal{M})_t$. 
As we will see in the proposition and corollaries below, $\zeta_t^{-1}$ exists and  its components can be computed explicitly. 

\begin{proposition}\label{prop:vigneaux}
For any $t>0$, the matrix $\zeta_t$ is invertible. Moreover, for any $x,y$ in $\ob(\mathcal{M}),$
\begin{equation}\label{eq:generalized_hall}
   \zeta_t^{-1}(x,y)= \sum_{k\geq 0}\sum_{\substack{\text{nondeg. paths}\\ x=y_0\to y_1\to\cdots \to y_k=y}\text{ in } \mathsf L}(-1)^k\prod_{i=1}^k\pi(y_i|y_{i-1})^t.
\end{equation}
\end{proposition}
\noindent(A \emph{non-degenerate path} $x=y_0\to y_1\to\cdots \to y_k=y$ consists of a sequence of composable non-identity morphisms.)
\begin{proof} 
Following \citet{rota1964} (see also \citep{LeinsterShulman2021}), we introduce the formal expansion
\begin{equation}\label{eq:infinite_expansion}
    \zeta_t^{-1} = \sum_{k\geq 0} (-1)^k (\zeta_t-\delta)^k,
\end{equation}
where $\delta$ is the identity matrix: $\delta(x,y) = 1$ if $x=y$ and $\delta(x,y) = 0$ otherwise. 

The matrix $\zeta-\delta$ is the adjacency matrix of a directed graph $D$ with vertices $\ob(\mathcal M)$ (equivalently, $\ob(\mathsf L)$) and edges $E=\{(z,z') \mid z \to z' \text{ in } \mathsf{L},\, z\neq z'\}$ (we are excluding edges coming from identity morphisms, whose weights vanish).  The power $(\zeta_t-\delta)^k$ counts non-degenerate paths of length $k$ in $D$. Since $\mathsf L$ is a finite poset, one readily verifies that $E$ is finite and that the graph has no cycles. Hence the sum in \eqref{eq:infinite_expansion} has a finite number of terms.\footnote{For general categories, one has to establish convergence of the series under certain conditions, see \citep{LeinsterShulman2021}.} 

The expression \eqref{eq:generalized_hall}  arises from evaluating \eqref{eq:infinite_expansion} on $(x,y)$. In fact,
\begin{equation*}
    (\zeta_t - \delta)(x,y) = 
        \pi(x,y)^t \mathbf 1_{(x,y)\in E},
\end{equation*}
where $\mathbf 1_\bullet$ denotes the indicator function, and more generally 
\begin{equation*}
    (\zeta_t - \delta)^k(x,y) = \sum_{\substack{(y_0,y_1,...,y_{k-1},y_k)\\ y_0=x,\, y_k = y \\ (y_{i-1}, y_{i})\in E \text{ for } i=1,...,k}} \prod_{i=1}^k\pi(y_i|y_{i-1})^t.
\end{equation*}
\end{proof}

\begin{remark}
    We envision extensions of this theory where the category $\mathsf L$ would not be a poset. This would be the case, for instance, if we consider the general relation of being a substring, instead of being just a prefix, cf.~\citep{GV2024}. Similarly, it might be possible to build an analogous category for syntactic trees, where morphisms would arise from the syntactic Merge operation and rearrangements of the subtrees of a tree followed by Merge. (Merge is the fundamental syntactic operation in Chomsky's minimalist program; it has been formalized as an operation on a Hopf algebra of syntactic forests in \citep{marcolli2025mathematical}.) In these situations, where the categories present nontrivial cycles, we would need different techniques to compute $\zeta_t^{-1}$, such as those introduced in \citep{vigneaux2024} by the second author. Our Proposition \ref{prop:vigneaux} can also be derived from this more general formalism.  
\end{remark}

The following corollary shows that the M\"obius coefficients obtained from $\zeta_t^{-1}$ have a simplified expression in terms of $\pi$ from Definition \ref{def:pi} and the M\"obius function $\zeta_\mathsf{L}^{-1}$, whose zeta function $\zeta_\mathsf{L}$ was described in Section~\ref{sec:mag_posets}. 

\begin{corollary}\label{cor:zeta_inverse1} 
For any strings $x,y$ in $\ob(\mathcal{M})$,
\[\zeta_t^{-1}(x,y)=\pi(y|x)^t\;\zeta_\mathsf{L}^{-1}(x,y).\]
\end{corollary}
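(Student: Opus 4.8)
The plan is to start from the explicit formula for $\zeta_t^{-1}(x,y)$ furnished by Proposition \ref{prop:vigneaux} and to observe that the weight attached to each nondegenerate path $x = y_0 \to y_1 \to \cdots \to y_k = y$ in $\mathsf L$ is actually independent of the path. The key structural point is that, since a path visits pairwise distinct vertices, every edge $y_{i-1}\to y_i$ is a non-identity morphism of $\mathsf L$, so each intermediate vertex $y_0,\dots,y_{k-1}$ is a proper prefix of $y$ and hence an \emph{unfinished} text — recall that a finished text $\bot a\dagger$ satisfies $\mathsf L_{\bot a\dagger} = \{\bot a\dagger\}$ and therefore cannot be a proper prefix of anything in $\mathsf L$. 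First I would use this to iterate the composition identity \eqref{eq:compositionality}: applying $\pi(y_i|y_0)\,\pi(y_{i+1}|y_i) = \pi(y_{i+1}|y_0)$ for $i = 1,\dots,k-1$ (legitimate precisely because $y_0$ and $y_i$ are unfinished) gives, by induction on $k$, that $\prod_{i=1}^k \pi(y_i|y_{i-1}) = \pi(y|x)$, with the empty product ($k=0$, which forces $x=y$) equal to $\pi(x|x) = 1$. Raising to the $t$-th power, every nondegenerate path from $x$ to $y$ contributes exactly the factor $\pi(y|x)^t$.

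Next I would pull this common factor out of the double sum in Proposition \ref{prop:vigneaux}, leaving
\[\zeta_t^{-1}(x,y) = \pi(y|x)^t \sum_{k\geq 0}\; \sum_{\substack{\text{nondeg. paths}\\ x=y_0\to\cdots\to y_k=y\text{ in }\mathsf L}}(-1)^k.\]
It then remains to identify the signed count of paths on the right with $\zeta_\mathsf{L}^{-1}(x,y)$. The cleanest route is to notice that the whole argument of Proposition \ref{prop:vigneaux} used the matrix $\zeta_t$ only through its support, which coincides with that of $\zeta_\mathsf{L}$ — namely the pairs $(x,y)$ with $x\to y$, together with the loops coming from $\zeta_\mathsf{L}(x,x)=1\neq 0$. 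Hence the identical reasoning, applied verbatim to the digraph $D(\zeta_\mathsf{L})$ whose edges all carry weight $1$, yields $\zeta_\mathsf{L}^{-1}(x,y) = \sum_{k\geq 0}\sum_{\text{nondeg. paths}}(-1)^k\prod_{i=1}^k \zeta_\mathsf{L}(y_{i-1},y_i) = \sum_{k\geq 0}\sum_{\text{nondeg. paths}}(-1)^k$. (Alternatively, one could invoke Philip Hall's theorem in the locally finite poset $\mathsf L$: $\mu_\mathsf{L}(x,y) = \sum_k (-1)^k c_k(x,y)$ where $c_k(x,y)$ counts chains $x = y_0 < y_1 < \cdots < y_k = y$, and such chains are exactly the nondegenerate paths from $x$ to $y$ in the associated digraph, cf. Example \ref{ex:mobius}.) Substituting this back produces the claimed factorization $\zeta_t^{-1}(x,y) = \pi(y|x)^t\,\zeta_\mathsf{L}^{-1}(x,y)$.

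I do not anticipate any serious obstacle; the single point requiring care — and it is really the heart of the statement — is the path-independence of $\prod_{i=1}^k \pi(y_i|y_{i-1})$, which rests on the observation that all intermediate vertices of a nondegenerate path are unfinished texts, so that \eqref{eq:compositionality} applies at every step with equality rather than mere inequality. Everything else is bookkeeping, and no analytic subtlety arises since all sums involved are finite: $\mathsf L^{\leq N}$ has finitely many objects and all its chains have length at most $N-1$.
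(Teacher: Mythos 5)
Your proposal is correct and follows essentially the same route as the paper: telescope the product $\prod_{i}\pi(y_i|y_{i-1})^t$ to $\pi(y|x)^t$ via the compositionality identity \eqref{eq:compositionality}, pull out the common factor, and identify the remaining signed count of nondegenerate paths with $\zeta_{\mathsf L}^{-1}(x,y)$, which the paper does via Philip Hall's theorem (your ``alternative''; your primary route of rerunning Proposition~\ref{prop:vigneaux} on $D(\zeta_{\mathsf L})$ amounts to the same thing). Your added remark that every intermediate vertex of a nondegenerate path is an unfinished text, so that \eqref{eq:compositionality} holds with equality at each step, is a correct and slightly more careful articulation of a point the paper leaves implicit.
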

\begin{proof} 
For any $k\geq 0$ and any nondegenerate path $x=y_0\to y_1\to \cdots \to y_k=y$ in $\mathsf{L}$, Equation \eqref{eq:compositionality} implies
\begin{align*}
    \prod_{i=1}^k\pi(y_i|y_{i-1})^t &=
    \pi(y_1|y_0)^t\pi(y_2|y_1)^t\pi(y_3|y_2)^t\cdots \pi(y_k|y_{k-1})^t\\
    &= \pi(y_2|y_0)^t\pi(y_3|y_2)^t\cdots \pi(y_k|y_{k-1})^t\\
    &=\quad \vdots\\
    &= \pi(y_k|y_0)^t.
\end{align*}
Hence the M\"obius coefficients $\zeta_t^{-1}(x,y)$ can be written as
\begin{align*}
    \pi(y|x)^t\sum_{k\geq 0}(-1)^k\#\{\text{nondegenerate paths of length $k$ from $x$ to $y$ in $\mathsf{L}$}\}
\end{align*}
which is equal to $\pi(y|x)^t\zeta_\mathsf{L}^{-1}(x,y)$ by Philip Hall's theorem \citep{Hall36}, cf.~\cite[Proposition 3.8.5]{stanley2011}, \cite[Corollary 1.5]{Leinster2008}.
\end{proof}

Now recall that $\zeta_\mathsf{L}^{-1}(x,y)=\mu_\mathsf{L}(x,y)$, and so $\zeta_t^{-1}$ can be simplified even further by Example \ref{ex:mobius}.

\begin{corollary}\label{cor:zeta_inverse2} 
For any strings $x,y$ in $\ob(\mathsf{L}),$ 
\[\zeta_t^{-1}(x,y)=
\begin{cases}
    -\pi(y|x)^t &\text{if }y\in\mathsf{L}_x^{(1)}\\
    1 &\text{if }y=x\\
    0 &\text{otherwise}.
\end{cases}\]
\end{corollary}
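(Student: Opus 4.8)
The plan is to combine the two results that immediately precede this corollary. Corollary~\ref{cor:zeta_inverse1} already expresses $\zeta_t^{-1}(x,y)$ as $\pi(y|x)^t\,\zeta_\mathsf{L}^{-1}(x,y)$, and Example~\ref{ex:mobius} identifies $\zeta_\mathsf{L}^{-1}$ with the M\"obius function $\mu_\mathsf{L}$ of the poset $\mathsf L$, whose values were computed there: $\mu_\mathsf{L}(x,x)=1$, $\mu_\mathsf{L}(x,xa_1)=-1$ for any $a_1\in A\cup\{\dagger\}$, and $\mu_\mathsf{L}(x,y)=0$ for every $y$ that is neither $x$ nor an immediate successor of $x$ (this covers both the case $x\not\to y$, where the value is $0$ by definition, and the case where $x\to y$ but $|y|-|x|\geq 2$, where the vanishing is the content of the last line of Example~\ref{ex:mobius}). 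So the whole proof is just a substitution followed by a case split according to whether $y=x$, $y\in\mathsf L_x^{(1)}$, or neither.

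Concretely, I would write: by Corollary~\ref{cor:zeta_inverse1} and Example~\ref{ex:mobius}, $\zeta_t^{-1}(x,y)=\pi(y|x)^t\mu_\mathsf{L}(x,y)$. If $y=x$ then $\pi(x|x)=1$ and $\mu_\mathsf{L}(x,x)=1$, giving $\zeta_t^{-1}(x,x)=1$. If $y\in\mathsf L_x^{(1)}$, i.e.\ $y=xa_1$ for some $a_1\in A\cup\{\dagger\}$, then $\mu_\mathsf{L}(x,xa_1)=-1$, so $\zeta_t^{-1}(x,y)=-\pi(y|x)^t$; note $\pi(y|x)=p_x(a_1)$ here, so this term is genuinely nontrivial in general. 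In all remaining cases $\mu_\mathsf{L}(x,y)=0$ (either because $x\not\to y$ or because $y$ extends $x$ by at least two tokens), and hence $\zeta_t^{-1}(x,y)=0$ regardless of the value of $\pi(y|x)^t$. This is exactly the three-case formula in the statement.

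There is essentially no obstacle: the only thing to be slightly careful about is making sure the ``otherwise'' case of Corollary~\ref{cor:zeta_inverse2} really does subsume \emph{both} the $x\not\to y$ situation and the ``$x\to y$ but $y\notin\{x\}\cup\mathsf L_x^{(1)}$'' situation, and that in the latter the factor $\pi(y|x)^t$, though possibly nonzero, is annihilated by $\mu_\mathsf{L}(x,y)=0$. I would also remark that because $\mathsf L$ has a natural height function and $\mathsf L_x$ is a tree (as noted in Example~\ref{ex:mobius}), there are no ``longer'' cancellations to track and the recursion for $\mu_\mathsf{L}$ terminates immediately after height one; one could alternatively cite Philip Hall's theorem as in the proof of Corollary~\ref{cor:zeta_inverse1}, but reusing the explicit values already recorded in Example~\ref{ex:mobius} is cleaner. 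The proof is therefore a couple of lines.
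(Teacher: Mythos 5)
Your proof is correct and is exactly the argument the paper intends: Corollary~\ref{cor:zeta_inverse2} is stated immediately after the observation that $\zeta_\mathsf{L}^{-1}=\mu_\mathsf{L}$, and its (implicit) proof is precisely the substitution of the values of $\mu_\mathsf{L}$ from Example~\ref{ex:mobius} into the formula of Corollary~\ref{cor:zeta_inverse1}, followed by the three-way case split you describe.
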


Finally, then, we may use these results to write down the \emph{magnitude function} of the generalized metric space $\mathcal{M}$. By Equation \eqref{eq:magnitude} it is the function $f:(0,\infty)\to\mathbb{R}$ given by $f(t)=\Mag(t\mathcal{M})$, where\footnote{By Corollary \ref{cor:zeta_inverse1}, it follows that $\Mag(t\mathcal{M})=\sum_{x,y\in\ob(\mathcal{M})}\pi(y|x)^t\zeta_{\mathsf{L}}^{-1}(x,y)$, which is a weighted version of the magnitude of the poset $\mathsf{L}$ in Example \ref{ex:mag_of_L}.}
\begin{align*}
    \Mag(t\mathcal{M})&:=\sum_{x,y\in\ob(\mathcal{M})}\zeta_t^{-1}(x,y). 
\end{align*}
Now recall that for any real number $t$, the \emph{$t$-logarithmic entropy} of a probability distribution $p=(p_1,\ldots,p_n)$ is equal to \[H_t(p)=\frac{1}{t-1}\left(1-\sum_{i=1}^np_i^t\right).\]  The following proposition expresses the magnitude function of $\mathcal{M}$ in terms of the logarithmic entropies of the probability distributions $(p_x\colon A\cup\{\dagger\}\to[0,1])_{x\in \ob(\mathsf{L})}$ generated by a  language model and the cardinality of the terminal states $T(\bot)$ of the beginning-of-sentence symbol, which is the set of all  theoretically possible terminating states of the model.  

\begin{proposition}\label{prop:magnitude_L}
The magnitude function of the generalized metric space $\mathcal{M}$ associated with an autoregressive language model is equal to
\[\Mag(t\mathcal{M})=(t-1)\sum_{x\in\ob(\mathcal{M})\setminus T(\bot)}H_t(p_x) + \# (T(\bot)), \qquad t>0.\]
\end{proposition}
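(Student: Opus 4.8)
The plan is to start from the magnitude formula $\Mag(t\mathcal{M})=\sum_{x,y\in\ob(\mathcal{M})}\zeta_t^{-1}(x,y)$ and substitute the explicit description of $\zeta_t^{-1}$ obtained in Corollary \ref{cor:zeta_inverse2}. Fixing $x\in\ob(\mathcal{M})$ and summing over $y$, the only nonzero contributions are $\zeta_t^{-1}(x,x)=1$ and $\zeta_t^{-1}(x,y)=-\pi(y|x)^t$ for each $y\in\mathsf{L}_x^{(1)}$, so that
\[
\sum_{y\in\ob(\mathcal{M})}\zeta_t^{-1}(x,y)=1-\sum_{y\in\mathsf{L}_x^{(1)}}\pi(y|x)^t.
\]
Here I must be careful to distinguish two kinds of $x$. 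If $x$ is a finished text or an unfinished text of length $N$ (equivalently $x\in T(\bot)$, i.e. $\mathsf{L}_x=\{x\}$), then $\mathsf{L}_x^{(1)}=\varnothing$ and the inner sum is just $1$. Otherwise $x=\bot a$ is unfinished with $|x|<N$, the model generates a genuine distribution $p_x$ on $A\cup\{\dagger\}$, and $\mathsf{L}_x^{(1)}=\{xa_1:a_1\in A\cup\{\dagger\}\}$ with $\pi(xa_1|x)=p_x(a_1)$; hence the inner sum equals $1-\sum_{a_1\in A\cup\{\dagger\}}p_x(a_1)^t$.

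Next I would recognize the latter expression as a Tsallis entropy: since $H_t(p_x)=\frac{1}{t-1}\bigl(1-\sum_{a_1}p_x(a_1)^t\bigr)$, we have $1-\sum_{a_1}p_x(a_1)^t=(t-1)H_t(p_x)$, valid for all $t>0$ with $t\neq 1$, and by continuity the limiting case $t=1$ poses no problem since both sides are continuous and agree in the limit (the left side tends to $0$, matching $(t-1)H_t\to 0$). Summing over all $x$,
\[
\Mag(t\mathcal{M})=\sum_{x\in T(\bot)}1+\sum_{x\in\ob(\mathcal{M})\setminus T(\bot)}(t-1)H_t(p_x)=\#(T(\bot))+(t-1)\!\!\sum_{x\in\ob(\mathcal{M})\setminus T(\bot)}\!\!H_t(p_x),
\]
which is the claimed identity.

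The only genuine subtlety — and the step I would be most careful about — is the bookkeeping identifying $\ob(\mathcal{M})\setminus T(\bot)$ with exactly the set of strings for which $\mathsf{L}_x^{(1)}$ is nonempty and $p_x$ is defined; this uses $\mathsf{L}_\bot^{(j)}=\mathsf{L}^{(j)}$ and the description of $T(\bot)$ as the finished texts together with the unfinished texts of length $N$. One should also note that $\pi(xa_1|x)=p(a_1|x)=p_x(a_1)$ is exactly the $k=1$ case of Definition \ref{def:pi}. Everything else is an elementary rearrangement, so there is no real obstacle beyond this indexing check; the heavy lifting was already done in Proposition \ref{prop:vigneaux} and its corollaries.
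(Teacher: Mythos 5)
Your proof is correct and follows essentially the same route as the paper's: apply Corollary \ref{cor:zeta_inverse2} to the double sum, split $\ob(\mathcal{M})$ into $T(\bot)$ and its complement, and identify the remaining sum with a Tsallis entropy via $(t-1)H_t(p_x)=1-\sum_a p_x(a)^t$. The only cosmetic difference is how the $t=1$ case is dispatched (you argue by continuity; the paper just notes it directly), and your careful indexing check that $\mathsf{L}_x^{(1)}\neq\varnothing$ precisely for $x\notin T(\bot)$ is exactly the bookkeeping the paper also points out.
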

\begin{proof}
By Corollary \ref{cor:zeta_inverse2},
\begin{align}
\Mag(t\mathcal{M})&=\sum_{x,y\in\ob(\mathcal{M})}\zeta_t^{-1}(x,y) \nonumber  \\[5pt] 
&=\sum_{x\in\ob(\mathcal{M})} \zeta^{-1}_t(x,x) - \sum_{x\in\ob(\mathcal{M})\setminus T(\bot)}\sum_{y\in\mathsf{L}_x^{(1)}}\pi(y|x)^t \label{eq:intermediate_expr_magM} \\[5pt]
&=\sum_{x\in\ob(\mathcal{M})\setminus T(\bot)}\left(1-\sum_{a\in A\cup\{\dagger\}} p_x(a)^t\right) + \# (T(\bot)). \label{eq:intermediate_expr_magM_final} 
\end{align}
Observe that the first sum in Equation \eqref{eq:intermediate_expr_magM} is over all objects in $\mathcal{M}$, including those strings $x$ that are not the prefix of any other string $y\neq x$, such as all finished texts. The double sum in Equation \eqref{eq:intermediate_expr_magM} accounts for unfinished strings and their extensions by a single token, which is why the indexing set is $\ob(\mathcal{M})\setminus T(\bot).$  Equation \eqref{eq:intermediate_expr_magM_final} follows from the decomposition  $\ob(\mathcal M) = T(\bot) \sqcup (\ob(\mathcal M)\setminus T(\bot))$ and the fact that $\zeta_t^{-1}(x,x)=1$ for all $x\in \ob(\mathcal M)$. 

When $t=1$, Equation \eqref{eq:intermediate_expr_magM_final} is simply $\# (T(\bot))$, since $p_x$ is a probability mass function on $A\cup\{\dagger\}$. When $t\neq 1$, we can multiply the sum by $(t-1)/(t-1)$ and rearrange factors to conclude. 
\end{proof}

As discussed in the Introduction, one might view $\Mag(t\mathcal{M})$ as measuring something of the effective size of a language model's linguistic space, in the sense that for $t\geq 1$, magnitude achieves its smallest possible value if the model is deterministic (that is, it only produces a single preferred text), and it achieves its largest possible value if the model is completely random (at every stage the next-token probability distribution has maximal entropy).

\begin{remark} It is easy to show (e.g.~via L'Hôpital's rule) that if $p:S\to [0,1]$ is a probability mass function on a finite set $S$,
\[ \lim_{t\to 1} H_t(p)= \lim_{t\to 1} \frac{1}{t-1} \left( 1 -\sum_{s\in S} p(s)^t\right) = -\sum_{s\in S} p(s)\ln p(s) =: H(p). \]
The quantity $H(p)$ is the \emph{Shannon entropy} of $p$ (in nats). This entails that Shannon entropy emerges when computing the derivative of the magnitude function $f(t) =\Mag(t\mathcal M)$ at $t=1$:
\begin{equation}\label{eq:derivative-magnitude}
    f'(1) = \sum_{x\in\ob(\mathcal{M})\setminus T(\bot)}H(p_x).
\end{equation}

Alternatively, we can deduce from Equation \eqref{eq:intermediate_expr_magM} that 
$$f(t) = \# (\ob(\mathcal M)) - \sum_{x\in \ob(\mathcal M) \setminus T(\bot)} Z_x(t),$$
where \[Z_x(t) = \sum_{a\in A\cup \{\dagger\}} p_x(a)^t= \sum_{a\in A\cup \{\dagger\}} e^{-t (-\ln p_x(a))} \] is the \emph{partition function} of a system with microstates $A\cup \{\dagger\}$ and internal energy $E_x(a) = -\ln p_x(a) =  d(x,xa)$ at inverse temperature $t>0$. One can verify (see, for instance, \citep{BFL-functor}) that
$$ H(p_x) = -\left.\frac{\mathrm d}{\mathrm d t} Z_x(t)\right|_{t=1},$$
from which Equation \eqref{eq:derivative-magnitude} also follows. 

Since internal energy does not have to be normalized, we could also write $f(t) = \#(\ob(\mathcal M)) - \tilde Z(t)$, where $\tilde Z$ is the partition function of a system with microstates $(x,a)\in S:=(\ob(\mathcal M) \setminus T(\bot))\times (A\cup \{\dagger\}) $  and internal energy $E(x,a) = d(x,a) = -\ln p_x(a)$. The set $S$ parameterizes indecomposable non-identity arrows in $\mathsf L$. For each $t>0$, there is a corresponding probability mass function (Gibbs state) $\rho$ on $S$, given by $\rho(s) = e^{-t E(s)}/\tilde Z(t)$. Then
\begin{equation*}
    \mathbb E_\rho(E) = \sum_{s\in S} E(s) \frac{e^{-tE(s)}}{\tilde Z(t)} = -\frac{\mathrm d}{\mathrm d t} \ln \tilde Z(t) = \frac{f'(t)}{\tilde Z(t)},
\end{equation*}
which gives yet another interpretation of the derivative of the magnitude function. 

\end{remark}

\begin{remark}\label{rmk:subspace_Mx} If desired, one can also make sense of the magnitude function of an enriched category associated with a particular string. That is, suppose $x\in\ob(\mathcal{M})$ and let $\mathcal{M}_x$ denote the $[0,\infty]$-category whose objects are strings $y$ containing $x$ as a prefix. Define the hom-object between any $y,z\in\ob(\mathcal{M}_x)$ to be  $\mathcal{M}_x(y,z):=\mathcal{M}(y,z)=-\ln\pi(z|y)$. Compositionality and the identity requirement hold since they hold in $\mathcal{M}$. So $\mathcal{M}_x$ is indeed a $[0,\infty]$-category and its magnitude function is given by
\[ \zeta_{\mathcal{M}_x}=\zeta_{\mathcal{M}}\big\vert_{\ob(\mathcal{M}_x)\times \ob(\mathcal{M}_x)}
\]
and because of Proposition \ref{prop:vigneaux}, its inverse is equal to
\[ \zeta_{\mathcal{M}_x}^{-1}=\zeta_{\mathcal{M}}^{-1}\big\vert_{\ob(\mathcal{M}_x)\times \ob(\mathcal{M}_x)},
\]
which implies
\begin{align*}
    \Mag(t\mathcal{M}_x) &= \sum_{y,z\in\ob(\mathcal{M}_x)}\zeta_{\mathcal{M}_x}^{-1}(y,z)\\[5pt]
    &= \sum_{y\in\ob(\mathcal{M}_x)}\zeta_{\mathcal{M}_x}^{-1}(y,y) - \sum_{y\in\ob(\mathcal{M}_x)\setminus T(x)} \;\sum_{y\in\mathsf{L}_y^{(1)}}\pi(z|y)^t\\[5pt]
    &=(t-1)\sum_{y\in\ob(\mathcal{M}_x) \setminus T(x)}H_t(p_y) + \#(T( x)).
\end{align*}
\end{remark}

\subsection{Magnitude homology}\label{ssec:homology}

The magnitude homology of metric spaces and enriched categories has been studied extensively by \citet{LeinsterShulman2021}. The main theorem of their work, namely \cite[Theorem 7.14]{LeinsterShulman2021}, expresses the magnitude function of a metric space $(M,d)$ as a weighted sum of Euler characteristics of certain chain complexes. More precisely, for each $\ell\in[0,\infty) $,  there is a chain complex comprised of free abelian groups $(MC_{k,\ell}(M))_{k\in \mathbb N}$ such that $MC_{k,\ell} (M) := \mathbb Z [ G_{k,\ell}]$ where 
\[ G_{k,\ell} = \left\{ ( y_0,\ldots , y_k ) \in  M^{k+1} \mid  
 \sum_{i=0}^{k-1} d(y_i,y_{i+1} ) = \ell \text{ and for all $i$, }y_i \neq y_{i+1}\right\}.\]
 The differential $\partial_k: MC_{k,\ell} (M) \to MC_{k-1,\ell} (M)$ is defined as an alternating sum
 \[\partial_k=\sum_{i=0}^k(-1)^i\partial_k^i\]
where for each $0<i<k,$
\begin{multline*}
    \partial_k^i(y_0,\ldots,y_k ) = \\
\begin{cases}
    (y_0,\ldots,y_{i-1},y_{i+1},\ldots,y_k ) &\text{if } d(y_{i-1},y_i)+d(y_i,y_{i+1})=d(y_{i-1},y_{i+1})\\
    0 &\text{otherwise}
\end{cases}
\end{multline*}
and $\partial_k^0=\partial_k^k=0$. The resulting $[0,\infty)$-graded chain complex $(MC_{\bullet,\ell} (M),\partial_\bullet)$ is called the \emph{magnitude complex} of $M$ \cite[Definition 3.3]{LeinsterShulman2021} and its homology $H_{\bullet, \bullet}(M)$ is called \emph{magnitude homology} \cite[Definition 3.4]{LeinsterShulman2021}. The same definitions hold for any generalized metric space, such as $\mathcal{M}.$

The following corollary expresses the magnitude function of $\mathcal M$ as a weighted sum of Euler characteristics. It is a direct result of Proposition \ref{prop:vigneaux} and also mirrors \cite[Theorem 7.14]{LeinsterShulman2021}, although the situation is considerably simpler here because each complex $(MC_{\bullet,\ell})_\ell$ is bounded and moreover only finitely many $\ell$ can arise as total lengths, which implies we only have to deal with finite sums instead of infinite series.

\begin{proposition}\label{prop:expression_magnitude} For any $t>0$, set $q=e^{-t}$. Then
    \[ \Mag(t\mathcal M) = \sum_{\ell } q^\ell 
\sum_{k\geq 0} (-1)^k \operatorname{rank}(H_{k,\ell}(\mathcal M))
\] 
where 
the first sum is over those finitely many values of $\ell\in [0,\infty)$ for which $\bigcup_{k\geq 0} MC_{k,\ell} (\mathcal M)\neq \emptyset$. 
\end{proposition}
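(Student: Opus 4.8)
The plan is to reconcile two expressions for $\Mag(t\mathcal M)$: the closed form from Corollary \ref{cor:zeta_inverse2} (equivalently Proposition \ref{prop:magnitude_L}) on one side, and the Euler-characteristic sum over magnitude homology on the other. The bridge is the observation, already implicit in Proposition \ref{prop:vigneaux}, that the M\"obius coefficient $\zeta_t^{-1}(x,y)$ is an alternating sum over nondegenerate paths $x=y_0\to\cdots\to y_k=y$ in $\mathsf L$ weighted by $\prod_i \pi(y_i|y_{i-1})^t = q^{\ell}$, where $\ell = \sum_i d(y_{i-1},y_i)$ is the total length of the path. First I would group the double sum $\Mag(t\mathcal M) = \sum_{x,y}\zeta_t^{-1}(x,y)$ by the value $\ell$: since $\pi(y_i|y_{i-1})^t = e^{-t\,d(y_{i-1},y_i)}$, each length-$k$ nondegenerate path contributes $(-1)^k q^{\ell}$, where $\ell$ is exactly the quantity defining membership in $G_{k,\ell}(\mathcal M)$. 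Hence
\[
\Mag(t\mathcal M) = \sum_{\ell} q^{\ell} \sum_{k\geq 0} (-1)^k \,\#\{\text{nondeg.\ length-}k\text{ paths in }\mathsf L\text{ of total length }\ell\}
= \sum_{\ell} q^{\ell} \sum_{k\geq 0}(-1)^k \operatorname{rank}\bigl(MC_{k,\ell}(\mathcal M)\bigr),
\]
because the generators of $MC_{k,\ell}(\mathcal M)$ are precisely the tuples $(y_0,\ldots,y_k)$ with consecutive entries distinct and $\sum d(y_i,y_{i+1})=\ell$, and in $\mathcal M$ having $d(y_i,y_{i+1})<\infty$ for all $i$ forces $y_0\to y_1\to\cdots\to y_k$ to be a chain in $\mathsf L$; distinctness of consecutive entries plus this chain condition in the tree-like $\mathsf L$ makes the whole tuple a nondegenerate path.

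Next I would pass from ranks of the chain groups to ranks of the homology groups via the standard fact that the Euler characteristic of a bounded complex of finitely generated free abelian groups equals the alternating sum of the ranks of its homology. The key finiteness inputs here are: (i) for fixed $\ell$, the complex $MC_{\bullet,\ell}(\mathcal M)$ is bounded above — indeed $k$ cannot exceed $N-1$ since a nondegenerate path in $\mathsf L$ strictly increases string length at each step, and $\mathsf L$ has height $N-1$; (ii) each $MC_{k,\ell}(\mathcal M)$ is finitely generated since $\ob(\mathcal M)$ is finite; and (iii) only finitely many values of $\ell$ occur, since there are only finitely many nondegenerate paths in $\mathsf L$ altogether. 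Therefore
\[
\sum_{k\geq 0}(-1)^k \operatorname{rank}\bigl(MC_{k,\ell}(\mathcal M)\bigr) = \sum_{k\geq 0} (-1)^k \operatorname{rank}\bigl(H_{k,\ell}(\mathcal M)\bigr)
\]
for each of the finitely many relevant $\ell$, and substituting this into the previous display yields the claim.

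The step I expect to need the most care is the identification of the generating set of $MC_{k,\ell}(\mathcal M)$ with nondegenerate paths in $\mathsf L$, i.e. showing the finite distance condition $d(y_i,y_{i+1})<\infty$ genuinely recovers the chain $y_0\to\cdots\to y_k$ and that the ``consecutive entries distinct'' condition of $G_{k,\ell}$ matches the nondegeneracy used in Proposition \ref{prop:vigneaux} (where the path is required to have \emph{all} vertices distinct). In the tree-like poset $\mathsf L$ these coincide: once $y_i \to y_{i+1}$ for all $i$ with each arrow a non-identity, the strings strictly increase in length, so the $y_i$ are automatically pairwise distinct — this is exactly the observation used in Example \ref{ex:mobius} and in the proof of Proposition \ref{prop:vigneaux}. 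A secondary subtlety is purely bookkeeping: confirming that the weight $\prod_{i=1}^k \pi(y_i|y_{i-1})^t$ equals $q^{\ell}$ with $\ell = \sum_{i=0}^{k-1} d(y_i,y_{i+1})$, which is immediate from $\pi(y|x)^t = e^{-t\,d(x,y)}$ and $q = e^{-t}$. Once these matchings are in place, the result follows directly from Proposition \ref{prop:vigneaux} without invoking the deformation-retract machinery of \cite{LeinsterShulman2021}.
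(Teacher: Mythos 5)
Your proposal is correct and follows essentially the same route as the paper: expand $\Mag(t\mathcal M)=\sum_{x,y}\zeta_t^{-1}(x,y)$ via Proposition \ref{prop:vigneaux}, regroup the alternating path sum by total length $\ell$, identify each summand with $(-1)^k q^\ell\operatorname{rank}(MC_{k,\ell}(\mathcal M))$, and pass to homology ranks by the standard Euler-characteristic argument for bounded complexes of finitely generated abelian groups. Your explicit reconciliation of the two distinctness conditions (consecutive-entries-distinct in $G_{k,\ell}$ versus all-vertices-distinct in a nondegenerate path), which agree in $\mathsf L$ because every non-identity arrow strictly increases string length, is a small detail the paper treats as implicit but does not alter the argument.
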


\begin{proof}
    By combining the definition of the magnitude function and its expression in Proposition \ref{prop:vigneaux}, we have
    \begin{align*}
        \Mag(t\mathcal M)& =\sum_{x,y\in \ob(\mathcal M)} \zeta^{-1}_t(x,y) \\
        &=  \sum_{x,y\in \ob(\mathcal M)}\; \sum_{k\geq 0}\sum_{\substack{\text{nondeg. paths}\\ x=y_0\to y_1\to\cdots \to y_k=y}\text{ in } \mathsf L}(-1)^k\prod_{i=1}^k\pi(y_i|y_{i-1})^t.
    \end{align*}
    Observe that the sum over nondegenerate paths can be restricted to those paths $x=y_0\to y_1\to\cdots \to y_k=y$ such that $\prod_{i=1}^k\pi(y_i|y_{i-1}) \neq 0$, which implies that $d(y_i,y_{i+1}) <\infty$ for any $i=0,\ldots,k-1$. 
    Further, there is a bijective correspondence between nondegenerate paths $x=y_0\to y_1\to\cdots \to y_k=y$ of total length $\ell := \sum_{i=0}^{k-1} d(y_i,y_{i+1} ) < \infty$ and generators $(y_0,...,y_k)$ in $G_{k,\ell}$.  Now remark that $\prod_{i=1}^k\pi(y_i|y_{i-1})^t =  \exp(-t \ell)$, and so it is natural to group all the paths that have the same total distance, resulting in the following:
    \begin{equation}\label{eq:expression_mag_chains}
         \Mag(t\mathcal M) = \sum_{k\geq 0} \sum_{\ell\in [0,\infty)} \sum_{c \in G_{k,\ell}} (-1)^k e^{-t\ell}. 
    \end{equation}
    Since $\mathcal M$ is finite, only finitely many real numbers $\ell \in [0,\infty)$ can arise as total lengths of nondegenerate paths, which clarifies the meaning of the  second sum. Moreover \[\sum_{c\in G_{k,\ell} } (-1)^k e^{-t\ell}  =  (-1)^k q^\ell \# G_{k,\ell} = (-1)^k q^{\ell} \operatorname{rank}(MC_{k,\ell}(\mathcal M)) .\] The combination of this fact and Equation \eqref{eq:expression_mag_chains} yields the claim by following a classical argument present in the proof of \cite[Theorem 6.17]{LeinsterShulman2021}, for instance; it is important that each group $MC_{k,\ell}$ is finitely generated and that $(MC_{\bullet, \ell})_\ell$ is bounded. 
\end{proof}

\begin{remark}  Proposition \ref{prop:expression_magnitude} expresses the magnitude function as a weighted sum of Euler characteristics. Together with Proposition \ref{prop:magnitude_L}, it establishes a connection between entropy and topological invariants, perhaps sitting alongside other recent results linking information theory and algebraic topology \citep{BB2015,vigneaux2020,bradley2021,Mainiero2019}.
\end{remark}

Finally, we make a few additional observations about the \emph{magnitude homology} of $\mathcal{M}$, which is the homology of the magnitude complex defined above. Although Leinster and Shulman  work primarily with metric spaces in \citep{LeinsterShulman2021}, two of their results translate directly to our setting: 
\begin{enumerate}
    \item The group $H_{0,0}(\mathcal M)$ is the free abelian group on $\ob (\mathcal M) $, and for any $\ell>0$, $H_{0,\ell}(\mathcal M)=0$, cf.~\cite[Theorem 4.1]{LeinsterShulman2021}.
    \item The group $H_{1,\ell}(\mathcal M)$ is the free abelian group on the set of ordered pairs $( x,y)$ such that $x\neq y$ and $d(x,y)=\ell$ and there does not exist any point strictly between $x$ and $y$ in $\mathsf L$, cf.~\cite[Theorem 4.3]{LeinsterShulman2021}.
\end{enumerate}

Given these facts, one rewrite Equation \eqref{eq:intermediate_expr_magM} as
\begin{equation}\label{eq:magnitude_via_homology}
    \Mag(t\mathcal M) = \operatorname{rank} H_{0,0}(\mathcal M)  - \sum_{\ell\geq 0} q^\ell \operatorname{rank} H_{1,\ell}(\mathcal M). 
\end{equation}
We conjecture that higher homology groups vanish. If this is the case,  Equation \eqref{eq:magnitude_via_homology} would also follow Proposition \ref{prop:expression_magnitude}.

\section{Final remarks}\label{sec:summary}
We conclude with a few remarks surrounding the results of this article. To start, one might notice that the zeta function associated with the generalized metric space $\mathcal{M}$ relates to perplexity, which is frequently used in evaluating autoregressive language models. To elaborate, the \emph{perplexity} of a tokenized sequence $y=a_0 a_1 \cdots a_n$ is 
\[PPL(x)=\exp\left\{-\frac{1}{n}\sum_{i=1}^n\ln p(a_i|y_{<i})\right\}\]
where $p(-|y_{<i})$ is the next-token probability distribution generated by the model when prompted with $y_{<i}$ \cite[Sec. 3.3]{Jurafsky2025}, cf.~Subsection \ref{ssec:induced_probs}. Perplexity is thus equal to the reciprocal of the zeta function $\zeta_t(a_0,y)$ when $t=1/n$, and $a_0$ is the first token (usually $\bot$), and $y$ is the full string:
\[
PPL(y)=1/\zeta_t(a_0,y)=1/e^{t\ln\pi(y|a_0)},
\]
or said differently, $\zeta_t(a_0,y)=1/PPL(y).$  So perhaps for intuition, one might wish to think of the zeta function $\zeta_t(x,y)$ for arbitrary $x,y$ as a generalization of the (reciprocal) of perplexity.

On a different note, as remarked in the introduction, one advantage to defining an enriched category of strings from a language is that one can pass to enriched \textit{copresheaves} on those strings, and the latter functor category contains rich structure. Concretely, there is a $[0,\infty]$-category $\widehat{\mathcal{M}}:=[0,\infty]^\mathcal{M}$ whose objects are $[0,\infty]$-copresheaves, that is, functions $f\colon \mathcal{M}\to[0,\infty]$ satisfying $\max\{f(y)-f(x),0\}\leq \mathcal{M}(x,y)$ for all strings $x$ and $y$. The relevance is that while $\mathcal{M}$ does not, a priori, have any sort of algebraic structure, the functor category $\widehat{\mathcal{M}}$ does, in the sense that one can compute weighted limits and colimits between copresheaves that are reminiscent of logical operations on meaning representations of texts \cite[Section 5]{ECTL2022}. It may be interesting to compute the magnitude of an appropriate finite version of $\widehat{\mathcal{M}}$, cf.~\cite[Example 2.5]{Leinster2008}.

Lastly, let us draw a brief connection to the notion of diversity. \emph{Categorical diversity} \citep{Chen2023} depends on a finite category $\mathsf{A}$, a probability mass function $p\colon\ob(\mathsf{A})\to[0,1]$, and a similarity matrix $\theta\colon \ob(\mathsf{A})\times \ob(\mathsf{A})\to[0,\infty)$ such that
\[\theta(a,b)=0 \quad \text{if } \quad \mathsf{A}(a,b)=\varnothing.\]
In the case of a generalized metric space, $\theta=\zeta_t$ and diversity is given by
\begin{align*}
    \mathscr{H}(\mathcal{M},p,\zeta)& = -\sum_{y\in\ob(\mathcal{M})}p(y)\log\left(\sum_{z\in\ob(\mathcal{M})}\zeta_t(y,z)p(z)\right)\\
   &=  - \sum_{y\in\ob(\mathcal{M})}p(y) \log\left(\sum_{z\in\ob(\mathcal{M})}\pi(z|y)^tp(z)\right).
\end{align*}
In  \citep{Chen2023} the authors make a case for seeing this function as a probabilistic extension of $\log(\text{magnitude})$, just as entropy is a probabilistic extension of $\log(\text{cardinality})$ already in the work of Boltzmann and Gibbs. In the particular case of metric spaces, there are very deep connections between magnitude and diversity \citep{Meckes2015}. We leave a detailed study of diversity of enriched categories of texts for future work, but remark here that if we take $p = \pi(-|x)|_{T(x)}$, then we recover the Shannon entropy of $\pi(-|x)|_{T(x)}$, which unlike our expression for the magnitude does not treat all states equally and gives more weight to those that are highly probable. 

Very interestingly, in the particular case of a homogeneous and irreducible Markov chain with transition matrix $P:A\times A\to [0,1]$ and with stationary distribution $p_{\bot}$, in the sense of Example \ref{ex:markov_chain},  one can verify (following a standard computation, see e.g.~the proof of \cite[Thm. 4.27]{Walters1982}) that the \emph{diversity rate}
\begin{multline}
    \lim_{n\to \infty} \frac{\mathscr{H}(\mathcal{M},\pi(-|\bot)|_{T(\bot)},\zeta)}{n} \nonumber \\ = \lim_{n\to\infty} -\frac 1n \sum_{(a_1,...,a_{N-1}) \in A^{N-1}} \pi(\bot a_1\cdots a_{N-1}|\bot) \log \pi(\bot a_1\cdots a_{N-1}|\bot)
\end{multline}
equals the Kolmogorov--Sinai entropy (rate) of the Markov chain,
$$-\sum_{a\in A} \sum_{a'\in A} p_{\bot}(a) P(a,a') \log P(a,a'), $$
which plays a fundamental role in information theory \cite[App. 3]{Shannon1948} and is an asymptotic  lower bound for the logarithm of the perplexity of any LM that approximates the Markov chain \cite[Sec. 3.7]{Jurafsky2025}.

\end{document}